\begin{document}

\setlength{\parindent}{5mm}
\renewcommand{\leq}{\leqslant}
\renewcommand{\geq}{\geqslant}
\newcommand{\N}{\mathbb{N}}

\newcommand{\Z}{\mathbb{Z}}
\newcommand{\R}{\mathbb{R}}
\newcommand{\C}{\mathbb{C}}
\newcommand{\F}{\mathbb{F}}
\newcommand{\g}{\mathfrak{g}}
\newcommand{\h}{\mathfrak{h}}
\newcommand{\K}{\mathbb{K}}
\newcommand{\RN}{\mathbb{R}^{2n}}
\newcommand{\ci}{c^{\infty}}
\newcommand{\derive}[2]{\frac{\partial{#1}}{\partial{#2}}}
\renewcommand{\S}{\mathbb{S}}
\renewcommand{\H}{\mathbb{H}}
\newcommand{\eps}{\varepsilon}
\renewcommand{\r} {\textbf{r}}
\renewcommand{\L}{\mathcal{L}}

\newcommand{\B}{B^{2n}(\frac{1}{\sqrt{\pi}})}

\theoremstyle{plain}
\newtheorem{theo}{Theorem}
\newtheorem{prop}[theo]{Proposition}
\newtheorem{lemma}[theo]{Lemma}
\newtheorem{definition}[theo]{Definition}
\newtheorem*{notation*}{Notation}
\newtheorem*{notations*}{Notations}
\newtheorem{corol}[theo]{Corollary}
\newtheorem{conj}[theo]{Conjecture}
\newtheorem{question}[theo]{Question}
\newtheorem*{question*}{Question}
\newtheorem*{remark*}{Remark}

\newenvironment{demo}[1][]{\addvspace{8mm} \emph{Proof #1.
    ---~~}}{~~~$\Box$\bigskip}

\newlength{\espaceavantspecialthm}
\newlength{\espaceapresspecialthm}
\setlength{\espaceavantspecialthm}{\topsep} \setlength{\espaceapresspecialthm}{\topsep}

\newenvironment{example}[1][]{\refstepcounter{theo} 
\vskip \espaceavantspecialthm \noindent \textsc{Example~\thetheo
#1.} }%
{\vskip \espaceapresspecialthm}

\newenvironment{remark}[1][]{\refstepcounter{theo} 
\vskip \espaceavantspecialthm \noindent \textsc{Remark~\thetheo
#1.} }%
{\vskip \espaceapresspecialthm}

\def\Homeo{\mathrm{Homeo}}
\def\Hameo{\mathrm{Hameo}}
\def\Diffeo{\mathrm{Diffeo}}
\def\Symp{\mathrm{Symp}}
\def\Id{\mathrm{Id}}
\newcommand{\norm}[1]{||#1||}
\def\Ham{\mathrm{Ham}}
\def\Hamtilde{\widetilde{\mathrm{Ham}}}
\def\Crit{\mathrm{Crit}}
\def\Spec{\mathrm{Spec}}
\def\osc{\mathrm{osc}}
\def\Cal{\mathrm{Cal}}

\title[Hofer's distance on Lagrangians]{Unboundedness of the Lagrangian Hofer distance in the Euclidean ball} 
\author{Sobhan Seyfaddini}
\date{\today}

\address{SS: D\'epartement de Math\'ematiques et Applications de l'\'Ecole Normale Sup\'erieure, 45 rue d'Ulm, F 75230 Paris cedex 05}
\email{sobhan.seyfaddini@ens.fr}

\subjclass[2010]{Primary 53D40; Secondary 37J05} 
\keywords{symplectic manifolds, Hofer's distance, quasimorphisms}

\begin{abstract} 
Let $\mathcal{L}$ denote the space of Lagrangians Hamiltonian isotopic to the standard Lagrangian in the unit ball in $\mathbb{R}^{2n}$.  We prove that the Lagrangian Hofer distance on $\mathcal{L}$ is unbounded.
\end{abstract}

\maketitle

\section{Introduction}
  Let $(M, \omega)$ be a symplectic manifold and denote by $C^{\infty}_c([0,1]\times M)$ the set of compactly supported Hamiltonians on $M$.  Each Hamiltonian $H \in C^{\infty}_c([0,1]\times M)$ generates a Hamiltonian flow $\phi^t_H$. The group of (compactly supported) Hamiltonian diffeomorphisms of $M$, denoted  by $Ham(M)$, is defined to be the set of time-1 maps of such flows.  The Hofer norm of $\psi \in Ham(M)$ is defined by the expression $\Vert \psi \Vert = \inf \{ \Vert H \Vert_{(1,\infty)}: \psi = \phi^1_H \},$
	where $\displaystyle \Vert H \Vert_{(1, \infty)} = \int_0^1{ (\max_M H(t, \cdot) - \min_M H(t, \cdot)) \,dt}.$
	
	Let $B^{2n} \subset \mathbb{R}^{2n}$ denote the open unit ball equipped with the symplectic structure $\omega_0 = \frac{1}{\pi} \sum_{i=1}^{n} dx_i \wedge dy_i$.  Denote by $L_0 = \{(x_i, y_i) \in B : y_i = 0 \} \cap B^{2n}$ the standard Lagrangian in the unit ball, and let $\L = \{ \phi(L_0): \phi \in Ham(B^{2n}) \}$.  We endow $\L$ with the Lagrangian Hofer distance $d$, defined by: $$\forall \;\; L_1, L_2 \in \L \;\;\; d(L_1, L_2) = \inf \{\Vert \phi \Vert : \phi \in Ham(B^{2n}) \text{, }\phi(L_1) = L_2 \}.$$ 
	
	In \cite{Kh09}, Khanevsky proved that, in dimension $2$, the metric space $(\L, d)$ is unbounded.  In higher dimensions, this seemingly basic question has remained open despite the fact that much progress has been made in the field of Lagrangian Hofer geometry; see for example \cite{hum12, Us2, Zap12}.  Our main goal here is to prove the unboundedness of $(\L, d)$ in full generality.  Let $C^{\infty}_c([0,1])$ denote the space of smooth and compactly supported functions on $[0,1]$.  For $f \in C^{\infty}_c([0,1])$ define $\Vert f \Vert_{\infty} = \max_{[0,1]} |f|$. Here is our main result. 
	
	\begin{theo} \label{theo_main}
	There exists a map $\Psi: C^{\infty}_c([0,1]) \rightarrow \L$ and positive constants $C, D \in \mathbb{R}$ such that 
	$$\frac{\Vert f-g \Vert_{\infty} - C}{D} \leq d(\Psi(f), \Psi(g)) \leq  \Vert f-g \Vert_{\infty}.$$
	In particular, the metric space $(\L, d)$ is unbounded.
	\end{theo}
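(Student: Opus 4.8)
The goal is to produce a map $\Psi$ that is a quasi-isometric embedding of $(C^{\infty}_c([0,1]),\norm{\cdot}_\infty)$ into $(\L,d)$; since the domain is unbounded, unboundedness of $(\L,d)$ follows immediately from the two asserted inequalities. My plan is to use $f$ as a \emph{profile} governing a deformation of $L_0$. I would fix once and for all an embedded arc modeled on $[0,1]$ inside $L_0$, together with a Weinstein-type model of a neighborhood of (a piece of) $L_0$, and let $H_f$ be a Hamiltonian that pushes $L_0$ in a conormal direction according to the profile $f$, chosen to depend \emph{linearly} on $f$ and normalized so that $\osc(H_f)$ is comparable to $\norm{f}_\infty$. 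I then set $\Psi(f)=\phi^1_{H_f}(L_0)$.

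I would prove the upper bound first, as it is the soft half. The Lagrangians $\Psi(f)$ and $\Psi(g)$ are joined by the isotopy interpolating their profiles, which is generated by (a compactly supported cutoff of) the autonomous Hamiltonian $H_g-H_f$. Because $f\mapsto H_f$ is linear and normalized, after suitably arranging the construction this Hamiltonian has oscillation at most $\norm{f-g}_\infty$, so its time-$1$ flow carries $\Psi(f)$ to $\Psi(g)$ with Hofer energy at most $\norm{f-g}_\infty$, giving $d(\Psi(f),\Psi(g))\le\norm{f-g}_\infty$.

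The lower bound is the heart of the matter, and the reason the problem is genuinely harder for $n\ge 2$ than in Khanevsky's two-dimensional case. The naive idea — to bound $d$ from below by a Lagrangian spectral invariant — cannot work directly: the unit ball has finite symplectic capacity, so every action-type invariant is \emph{bounded}, whereas we need an invariant that grows without bound as $\norm{f}_\infty\to\infty$. The remedy, reflected in the keyword \emph{quasimorphisms}, is to use a \emph{homogeneous} Hofer-Lipschitz quasimorphism, which escapes this obstruction precisely because of its homogeneity. Concretely, I would compactify $\B$ inside a closed symplectic manifold (for instance $\C P^n$) in which $L_0$ extends to a monotone Lagrangian with nonvanishing Floer homology that is superheavy — such as $\R P^n$ with $\Z/2$ coefficients — and invoke the associated Entov–Polterovich Calabi quasimorphism $\mu$ on $\Hamtilde$. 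Any $\phi\in\Ham(\B)$, being compactly supported, extends by the identity to $\C P^n$, and rescaling $f$ rescales the generating Hamiltonian, so homogeneity of $\mu$ forces growth linear in the scale of the profile, hence linear in $\norm{f}_\infty$.

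The main obstacle, and where I expect the real work to lie, is twofold. First, one must show that $\mu$ is controlled on the stabilizer of $L_0$: if $\psi\in\Ham(\B)$ satisfies $\psi(L_0)=L_0$, then $\mu(\psi)$ must be bounded by a constant independent of $\psi$, so that $\mu$ descends (up to bounded error) to a well-defined Lipschitz function on $\L$. This is exactly the step that should exploit superheaviness of the extended Lagrangian, via a Calabi-type estimate on Hamiltonians preserving it. Second, one must estimate from below the value of this descended invariant on the explicit family, matching it with $\norm{f}_\infty$ up to the additive defect of the quasimorphism and a multiplicative constant coming from the normalization relating $\mu$ to the profile height. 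Combining these yields $|\mu(\Psi(f))-\mu(\Psi(g))|\ge \norm{f-g}_\infty-C$ together with $|\mu(\Psi(f))-\mu(\Psi(g))|\le D\,d(\Psi(f),\Psi(g))$, and rearranging gives the stated lower bound.
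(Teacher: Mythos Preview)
Your overall architecture is right --- Hofer--Lipschitz quasimorphism, descent to $\mathcal{L}$ via boundedness on the stabilizer of $L_0$, evaluation on an explicit family --- and you correctly locate the descent step (using superheaviness of $\R P^n$) as the non-standard ingredient. But the final inequality you write down, $|\mu(\Psi(f))-\mu(\Psi(g))|\ge \Vert f-g\Vert_\infty-C$, cannot hold for a \emph{single} real-valued quasimorphism $\mu$: the map $\mu\circ\Psi$ lands in $\R$, so it collapses the infinite-dimensional space $(C^\infty_c,\Vert\cdot\Vert_\infty)$ onto a line and necessarily has large ``level sets'' on which $\Vert f-g\Vert_\infty$ is unbounded while $\mu\circ\Psi$ is constant. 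Homogeneity only tells you $\mu(\Psi(\lambda f))\approx\lambda\,\mu(\Psi(f))$, i.e.\ linear growth along rays; it says nothing about separating two functions with the same ``$\mu$-shadow''.

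The paper's remedy is to use a one-parameter \emph{family} of quasimorphisms $\eta_\delta$, $\delta\in(\tfrac{n}{n+1},1]$, built from conformally symplectic embeddings $\theta_\delta:\B\hookrightarrow\C P^n$ that send the torus $T(\tfrac{1}{\delta\pi(n+1)})$ onto the Clifford torus. The map $\Psi$ is then chosen so that the generating Hamiltonians are \emph{radial}: $\Psi(f)=\phi^1_{\tilde f}(L_0)$ with $\tilde f(z)=f(\pi|z|^2)$. Radial Hamiltonians are constant on each torus $T(r)$, so superheaviness of the Clifford torus forces $\eta_\delta(\phi^1_{\tilde f-\tilde g})=(f-g)\bigl(\tfrac{n}{\delta(n+1)}\bigr)$ --- a genuine point evaluation. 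Varying $\delta$ lets you pick the point where $|f-g|$ is maximal, recovering $\Vert f-g\Vert_\infty$. Thus two distinct superheavy sets are in play: $\R P^n$ (containing $\theta_\delta(L_0)$) handles the descent step exactly as you outline, while the Clifford torus handles the evaluation step that your proposal leaves unaddressed. Your conormal-push construction of $\Psi$ would not interact well with this mechanism; the radial choice is what makes the family of tori do the work.
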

	
	\begin{remark*}
	I have recently been informed that Michael Khanevsky and Frol Zapolsky have jointly obtained a different proof of the unboundedness of $(\L, d)$.  Their proof relies on a forthcoming work of R\'emi Leclercq and Frol Zapolsky on Lagrangian spectral invariants \cite{LZ}.
	\end{remark*}
	
	\section{Proof of Theorem \ref{theo_main}}
	This paper, like Khanevsky's, relies heavily on Entov and Polterovich's work on the theory of quasimorphisms \cite{EP03, BEP, EP06, EP09, EPP}.  A quasimorphism on a group $G$ is a function $\mu : G \rightarrow \mathbb{R}$ such that $|\mu(ab) - \mu(a) - \mu(b)| \leq D,\;\; \forall a,b \in G$, where $D$ is a constant which is usually referred to as the defect of the quasimorphism $\mu$.  If $\mu(a^k) = k \mu(a)$ for all $a \in G$ and $k \in \mathbb{Z}$, then $\mu$ is called a homogeneous quasimorphism.
	
	Let $\B$ denote the open Euclidean ball of radius $\frac{1}{\sqrt{\pi}}$.  For $0 < r < \frac{1}{n \pi}$, let  $T(r) = \{(z_1, \cdots, z_n) \subset \C^n: |z_1|^2 = \cdots = |z_n|^2 = r\}$.  Note that $T(r)$ is a Lagrangian torus in $\B$.  In Section \ref{construction}, we will construct a family of homogenous quasimorphisms $\eta_{\delta}: Ham(\B) \rightarrow \mathbb{R}$; the parameter $\delta$ will range over the interval  $(\frac{n}{n+1},1]$.  Note that for values of $\delta > \frac{n}{n+1}$, the Lagrangian torus  $T(\frac{1}{\delta \pi (n+1)})$ is contained in $\B$.
	
\begin{theo}\label{theo_quasi}
	The homogeneous quasimorphisms $\eta_{\delta}: Ham(\B) \rightarrow \mathbb{R}$ have the following properties	
	\begin{enumerate}
	\item \label{Hofer_Lip} There exists a constant $C > 0$, independent of $\delta$, such that  $|\eta_{\delta}(\phi)| \leq C \Vert \phi \Vert$.
	
	\item \label{super_heavy} If $F: [0,1] \times \B \rightarrow \mathbb{R}$ is a compactly supported Hamiltonian such that $F(t,x)\leq c$ (res. $F(t,x)\geq c$) for all $(t,x) \in [0,1] \times T(\frac{1}{\delta \pi (n+1)})$, then $\eta_{\delta}(\phi^1_F) \leq c$ (res. $\eta_{\delta}(\phi^1_F) \geq c$). 
	\item \label{descent} There exists a constant $D >0$, independent of $\delta$, such that if $\phi(L_0) = \psi(L_0)$, then $|\eta_{\delta}(\phi) - \eta_{\delta}(\psi)| \leq D$.
	\end{enumerate}
	\end{theo}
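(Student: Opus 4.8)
The plan is to realize each $\eta_{\delta}$ as the pullback, under a symplectic embedding, of an Entov--Polterovich quasimorphism on $\C P^n$. For $\delta \in (\frac{n}{n+1}, 1]$ let $M_{\delta}$ denote $\C P^n$ equipped with the Fubini--Study form scaled so that the symplectic area of a line equals $\frac{1}{\delta \pi}$; this is a monotone symplectic manifold. Deleting the hyperplane at infinity identifies the affine chart with $\C^n$, and since $\delta \leq 1$ the region $\{\sum_i |z_i|^2 < \frac{1}{\pi}\}$, which is exactly $\B$, embeds symplectically into this chart by a map $\iota_{\delta}$. In the moment coordinates $\mu_i = |z_i|^2$ the image simplex has total size $\frac{1}{\delta\pi}$, so its central (monotone) fibre sits at $\mu_i = \frac{1}{\delta\pi(n+1)}$; that is, $\iota_{\delta}$ carries $T(\frac{1}{\delta \pi (n+1)})$ precisely onto the Clifford torus $T_{cl}$, and the constraint $\delta > \frac{n}{n+1}$ is exactly what keeps this fibre inside $\B$. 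Since $QH_*(\C P^n)$ is a field it has a unique idempotent, and the associated Entov--Polterovich homogeneous quasimorphism, which for $\C P^n$ descends to $Ham(M_{\delta})$, furnishes $\mu_{\delta}: Ham(M_{\delta}) \to \R$. I then set $\eta_{\delta} := \mu_{\delta} \circ (\iota_{\delta})_*$, where $(\iota_{\delta})_*: Ham(\B) \to Ham(M_{\delta})$ extends a compactly supported Hamiltonian on $\B$ by zero.

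Properties \eqref{Hofer_Lip} and \eqref{super_heavy} then follow from the corresponding properties of $\mu_{\delta}$. For \eqref{Hofer_Lip}, the homogenization of a spectral invariant is $1$-Lipschitz for the Hofer norm, so $|\mu_{\delta}(\cdot)| \leq \Vert \cdot \Vert$ on $Ham(M_{\delta})$, and this bound is invariant under rescaling the form, hence independent of $\delta$. Extension by zero preserves the $(1,\infty)$-norm: a compactly supported $H$ on $\B$ already attains the value $0$, so its oscillation over $M_{\delta}$ equals its oscillation over $\B$; thus $|\eta_{\delta}(\phi)| \leq \Vert \phi \Vert$ and $C = 1$ works. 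For \eqref{super_heavy}, the monotone Clifford torus is $\mu_{\delta}$-superheavy by the Entov--Polterovich rigidity theorem, whose Hamiltonian form gives $\mu_{\delta}(\phi^1_{\bar F}) \leq \int_0^1 \max_{T_{cl}} \bar F_t \, dt$. If $F$ is compactly supported in $\B$ with $F \leq c$ on $[0,1] \times T(\frac{1}{\delta\pi(n+1)})$, its zero-extension $\bar F$ satisfies $\bar F \leq c$ on $[0,1] \times T_{cl}$, so the displayed inequality yields $\eta_{\delta}(\phi^1_F) \leq c$; the reverse inequality is identical.

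The crux is property \eqref{descent}, and here lies the main obstacle. By the quasimorphism and homogeneity axioms it suffices to bound $\eta_{\delta}$ on the stabilizer $\mathrm{Stab}(L_0) = \{\theta \in Ham(\B): \theta(L_0) = L_0\}$, since $\phi(L_0) = \psi(L_0)$ gives $\psi^{-1}\phi \in \mathrm{Stab}(L_0)$ and $|\eta_{\delta}(\phi) - \eta_{\delta}(\psi)| \leq |\eta_{\delta}(\psi^{-1}\phi)| + D_{\mathrm{def}}$, with $D_{\mathrm{def}}$ the defect. For $\theta \in \mathrm{Stab}(L_0)$ in the identity component I would choose a Hamiltonian path inside the stabilizer; its generator $F_t$ has $X_{F_t}$ tangent to $L_0$, so $F_t$ is constant along the connected Lagrangian $L_0$ and may be normalized to vanish there. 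The difficulty is that vanishing of $F_t$ on $L_0$ pins it down only at the $2^n$ points $L_0 \cap T(\frac{1}{\delta\pi(n+1)})$, so the superheavy bounds $\int_0^1 \min_T F_t \, dt \leq \eta_{\delta}(\theta) \leq \int_0^1 \max_T F_t \, dt$ do not by themselves control $|\eta_{\delta}(\theta)|$.

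To extract a genuine bound I would exploit the anti-symplectic involution $\sigma(z) = \bar z$, which fixes $L_0$ pointwise and preserves every torus $T(r)$. Conjugation by $\sigma$ is an automorphism of $Ham(\B)$ under which the spectral quasimorphism is odd, so $\eta_{\delta}(\sigma\theta\sigma^{-1}) = -\eta_{\delta}(\theta)$, while a direct check shows $\sigma\theta\sigma^{-1}$ again lies in $\mathrm{Stab}(L_0)$ and agrees with $\theta$ along $L_0$. Writing $2\eta_{\delta}(\theta) = \eta_{\delta}(\theta) - \eta_{\delta}(\sigma\theta\sigma^{-1})$ and applying the quasimorphism inequality reduces the estimate to controlling the correction term $\theta\,\sigma\theta^{-1}\sigma^{-1}$, which fixes $L_0$ pointwise. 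The essential new input --- and the reason the naive separation argument succeeds only when $n = 1$, where $L_0$ divides $\B$ into two halves interchanged by $\sigma$, but fails for $n \geq 2$ --- is to bound this correction term by a constant independent of both $\theta$ and $\delta$. I expect this quantitative step, combining the involution with a displacement of $L_0$ off the torus and the Hofer-Lipschitz estimate of \eqref{Hofer_Lip}, to be the main technical obstacle.
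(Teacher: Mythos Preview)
Your treatment of \eqref{Hofer_Lip} and \eqref{super_heavy} is essentially the paper's: realize $\eta_\delta$ as the asymptotic spectral invariant of the zero-extension under an embedding $\B\hookrightarrow\C P^n$ sending $T(\frac{1}{\delta\pi(n+1)})$ to the Clifford torus, then invoke Hofer--Lipschitzness and superheaviness of the Clifford torus. (Your rescaled target $M_\delta$ is equivalent to the paper's conformally symplectic embedding $\theta_\delta$.)

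The gap is in \eqref{descent}. You correctly reduce to the stabilizer and observe that a Hamiltonian generating a path inside $\mathrm{Stab}(L_0)$ can be normalized to vanish on $L_0$, but then you abandon this and turn to an anti-symplectic involution argument that you yourself leave unfinished. The step you are missing is that the embedding carries $L_0$ into $\R P^n\subset\C P^n$, and $\R P^n$ is \emph{also} superheavy for the same quasi-state. Hence a compactly supported $H$ with $H_t|_{L_0}=0$ has zero-extension vanishing on $\R P^n$, and superheaviness of $\R P^n$ (not of the Clifford torus) gives $\eta_\delta(\phi^1_H)=0$ directly. Your detour through the involution and the commutator $\theta\,\sigma\theta^{-1}\sigma^{-1}$ is unnecessary, and the ``main technical obstacle'' you anticipate does not actually arise.

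There is a second, smaller gap: you assume $\theta\in\mathrm{Stab}(L_0)$ lies in the identity component of the stabilizer, so that an isotopy through the stabilizer exists. This is not obvious a priori. The paper sidesteps it by an explicit scaling: for $s\in[\epsilon,1]$ set $\psi_s(x)=s\,\phi(x/s)$ on $|x|\le s$ and identity outside, giving a path in $\mathrm{Stab}(L_0)$ from $\psi_\epsilon$ (displaceably supported, hence $\eta_\delta(\psi_\epsilon)=0$) to $\psi_1=\phi$, with generator vanishing on $L_0$. The quasimorphism inequality then bounds $|\eta_\delta(\phi)|$ by the defect, and homogeneity upgrades this to $\eta_\delta(\phi)=0$.
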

	
	We will next use the above theorem to prove Theorem \ref{theo_main}.  The fact that Theorem \ref{theo_main} follows from the existence of quasimorphisms with the above list of properties is implicitly present in Khanevsky's paper \cite{Kh09}.  Various versions of the quasimorphisms employed in this note have appeared in the work of Entov, Polterovich and their collaborators; see for example \cite{EP03, BEP, EP06, EP09, EPP}.  This close connection to Entov and Polterovich's work allows us to prove the first two properties by standard techniques.  The non-standard part is establishing Property \eqref{descent}, which in a sense states that, up to a bounded error, $\eta_{\delta}(\phi)$ is an invariant of the Lagrangian $\phi(L_0)$.  In \cite{Kh09}, Khanevsky proves this property using two dimensional methods.  We will use a different approach which is applicable in all dimensions.
	
	\begin{proof}[Proof of Theorem \ref{theo_main}]
	We will prove the theorem for $\B$ rather than $B^{2n}$.  Of course, this is sufficient as $\B$ and $B^{2n}$ are conformally symplectomorphic.  We will continue to denote by $L_0$ the restriction of $\{(x_i, y_i) \in B : y_i = 0 \}$ to $\B$ and by $\mathcal{L}$ the space of Lagrangians which are Hamiltonian isotopic to $L_0$ inside $\B$.
	
	Take $\phi \in Ham(\B)$ and let $\psi$ denote any other Hamiltonian diffeomorphism such that $\phi(L_0) = \psi(L_0)$.  Using Properties (\ref{Hofer_Lip}) and (\ref{descent}) we obtain the following for all $\delta \in (\frac{n}{n+1},1]$:
	$$|\eta_\delta(\phi)| - D \leq |\eta_\delta(\psi)| \leq C \Vert \psi \Vert.$$
	The above implies that $\forall \phi \in Ham(\B)$ and $\forall \delta \in (\frac{n}{n+1},1]$ we have
	\begin{equation}\label{eq:lowerbound}
	\frac{|\eta_\delta(\phi)| - D}{C} \leq d(L_0, \phi(L_0)).
	\end{equation}
	
	Let $J = [\frac{n}{n+1}, 1 ]$ and denote by $C^{\infty}_c(J)$ the set of functions whose support is compactly contained in the interior of $J$.   We will construct a map $\Psi: C^{\infty}_c(J) \rightarrow \L$ such that $\frac{\Vert f-g \Vert_{\infty} - C}{D} \leq d(\Psi(f), \Psi(g)) \leq  \Vert f-g \Vert_{\infty}.$  Of course, this establishes Theorem \ref{theo_main} as $J$ is diffeomorphic to $[0,1]$.
	
	For any $f \in C^{\infty}_c(J)$, set $\tilde{f}(z) = f(\pi |z|^2),$ where $z \in \C^n$. Note that the Hamiltonian $\tilde{f}$ is compactly supported in $\B.$ We define $\Psi: C^{\infty}_c(J) \rightarrow \L$ by the following expression: $$\Psi(f) = \phi^1_{\tilde{f}}(L_0).$$

	 We will first show that $\frac{ \Vert f -g \Vert_{\infty} - D}{C} \leq d(\Psi(f), \Psi(g))$.  Apply Inequality \eqref{eq:lowerbound} to $\phi^{-1}_{\tilde{g}} \phi^1_{\tilde{f}}$, where $\phi^{-1}_{\tilde{g}}$ denotes the inverse of $\phi^{1}_{\tilde{g}}$, to obtain 
	
	$$\frac{|\eta_\delta(\phi^{-1}_{\tilde{g}} \phi^1_{\tilde{f}})| - D}{C} \leq d(L_0, \phi^{-1}_{\tilde{g}} \phi^1_{\tilde{f}}(L_0)), \;\; \forall \delta \in (\frac{n}{n+1}, 1].$$
	
Observe that $\tilde{f}$ and $\tilde{g} $ Poisson commute and hence $\phi^{-1}_{\tilde{g}} \phi^1_{\tilde{f}} = \phi^1_{\tilde{f}-\tilde{g}}.$  Also, by symplectic invariance of Hofer's distance $d(L_0, \phi^{-1}_{\tilde{g}} \phi^1_{\tilde{f}}(L_0)) = d(\phi^1_{\tilde{g}}(L_0), \phi^1_{\tilde{f}}(L_0))$.  Therefore, the previous inequality is equivalent to 
	$$\frac{ |\eta_\delta(\phi^1_{\tilde{f}-\tilde{g}})| - D}{C} \leq  d(\phi^1_{\tilde{g}}(L_0), \phi^1_{\tilde{f}}(L_0)), \;\; \forall \delta \in (\frac{n}{n+1}, 1].$$

Note that $\tilde{f}$ and $\tilde{g}$ are constant on each of the tori $T(\frac{1}{\delta \pi (n+1)})$.  Hence, using Property \eqref{super_heavy} from Theorem \ref{theo_quasi}, we conclude that $\eta_\delta(\phi^1_{\tilde{f}-\tilde{g}}) = f(\frac{n}{\delta(n+1)})-g (\frac{n}{\delta(n+1)})$.  Picking $\delta$ so that $f-g$ attains its maximum at $\frac{n}{\delta(n+1)}$ yields :
	
$$\frac{ \Vert f -g \Vert_{\infty} - D}{C} \leq  d(\phi^1_{\tilde{g}}(L_0), \phi^1_{\tilde{f}}(L_0)).$$

It remains to prove that $d(\Psi(f), \Psi(g)) \leq \Vert f -g \Vert_{\infty},$ for any $f, g \in C^{\infty}_c(J)$. Indeed, 
$$d(\Psi(f), \Psi(g)) = d(\phi^1_{\tilde{f}}(L_0), \phi^1_{\tilde{g}}(L_0)) =  d(L_0, (\phi^1_{\tilde{f}})^{-1} \phi^1_{\tilde{g}}(L_0))\leq || (\phi^1_{\tilde{f}})^{-1} \phi^1_{\tilde{g}}||$$ $$ \leq \Vert \tilde{f} - \tilde{g} \Vert_{\infty} = \Vert f - g \Vert_{\infty}.$$ 
This completes our proof.
	\end{proof}
	
	\section{Construction of the Quasimorphisms}\label{construction}  
	In this section, following Biran, Entov, and Polterovich \cite{BEP}, we construct the quasimorphisms $\eta_\delta$.
	
	\subsection{Spectral invariants on $\mathbb{C}P^n$} \label{subsec: spectral_invariants}
	Let $\omega_{FS}$ denote the Fubini-Study symplectic form on $\mathbb{C}P^n$ normalized so that $Vol(\C P^n)=1$.  Throughout the rest of this article we assume that $\mathbb{C}P^n$ is equipped with the symplectic structure induced by $\omega_{FS}$.
	
  It follows from the works of  C. Viterbo, M. Schwarz, and Y.-G. Oh \cite{viterbo1, schwarz, oh2} that one can associate to each Hamiltonian $H \in C^{\infty}([0,1] \times \mathbb{C}P^n)$ and each non-zero quantum homology class $ a \in QH_*(\mathbb{C}P^n)) \setminus \{0\}$  a so called spectral invariant $c(a, H) \in \mathbb{R}.$  These invariants are constructed via the machinery of Hamiltonian Floer theory and, in fact, they can be defined on a very large class of symplectic manifolds, however, we will only be concerned with $\mathbb{C}P^n$.  The only quantum homology class that we will be dealing with is the fundamental class $[\mathbb{C}P^n]$; for brevity we will denote $c(H) = c([\C P^n], H)$. 
	
	For our purposes we must introduce the asymptotic version of the spectral invariant $c$.  This is defined as follows: for $H, G \in C^{\infty}([0,1] \times \C P^n)$ define $H\#G(t,x) = H(t,x) + G(t, (\phi^t_H)^{-1}(x))$; the flow of $H\#G$ is $\phi^t_H \circ \phi^t_G$.  Following Entov and Polterovich \cite{EP03}  we define the asymptotic spectral invariant of a Hamiltonian $H$ by 
  $$\zeta(H) = \lim_{k \to \infty} \frac{c(H^{\#k})}{k}.$$
	Our asymptotic spectral invariant, $\zeta$, is defined for all time-dependent Hamiltonians.  The restriction of $\zeta$ to the set of time-independent Hamiltonians, i.e. $C^{\infty}(\C P^n)$, is what Entov and Polterovich refer to as a symplectic quasi-state on $\C P^n$.  In fact, the restriction of $\zeta$ to $C^{\infty}(\C P^n)$ is precisely the quasi-state constructed on $\C P^n$ in \cite{EP06}. 
	
	According to Entov and Polterovich \cite{EP09}, a closed subset $X \subset \C P^n$ is said to be superheavy with respect to $\zeta$ if 
	$$\inf_X H \leq \zeta(H) \leq  \sup_X H \;\; \forall H \in C^{\infty}(\C P^n).$$ The Clifford torus and $\R P^n$ are two examples of superheavy subsets of $\C P^n$.  Superheavyness of these two sets follows from Theorems 1.6 and 1.15 of \cite{EP09}, respectively.   In \cite{EP09}, the notion of superheavyness is defined only for autonomous Hamiltonians.  Using standard properties of spectral invariants one can easily show that if $X \subset \C P^n$ is superheavy, then
	
	\begin{equation}\label{eq:superheavy} \inf_X H \leq \zeta(H) \leq  \sup_X H \;\; \forall H \in C^{\infty}([0,1]  \times \C P^n) .\end{equation}
	
\subsection{Entov and Polterovich's Calabi quasi-morphism}\label{subsec: cal_quasi}
	In \cite{EP03}, Entov and Polterovich construct a quasimorphism $\mu: Ham(\C P^n) \rightarrow \R$ which is defined by the following expression:  
	\begin{equation}\label{def_mu}
	\mu(\phi^1_F) =  - \zeta(F) + \int_0^1 \int_{\C P^n} F(t,x) \, \omega_{FS}^n \; dt.
	\end{equation}

In \cite{EP03}, it is proven that $\mu$ does not depend on the choice of the generating Hamiltonian $F$ and hence it is a well-defined map from $Ham(\C P^n)$ to $\R$. Furthermore, $\mu$ is a homogeneous quasimorphism.  The relationship between this quasimorphism and the aforementioned quasi-state $\zeta$ is discussed in detail in \cite{EP06}.  For example, the above formula relating $\zeta$ and $\mu$ can be found in Section 6 of \cite{EP06}.

Recall that a subset $U$ of a symplectic manifold is said to be displaceable if there exists a Hamiltonian diffeomorphism $\psi$ such that $U \cap \psi(U) = \emptyset$.  If the support of a Hamiltonian $F$ is displaceable, then\begin{equation}\label{eq:vanishing} \zeta(F) = 0, \end{equation} and hence, $\mu(\phi^1_F) =  \int_0^1 \int_{\C P^n} F(t,x) \, \omega_{FS}^n \; dt.$  This is referred to as the Calabi property of $\mu$ and for this reason $\mu$ is often referred to as a Calabi quasimorphism; for further details see \cite{EP03, EP06}.

\subsection{Constructing $\eta_\delta$}\label{subsec: construct}
In this section we closely follow Section 4 of \cite{BEP}.  For $0< \delta \leq 1$, define embeddings $\theta_\delta : \B \rightarrow \C P^n$ by the formula:
$$\displaystyle (z_1, \cdots, z_n) \mapsto [\sqrt{\frac{1}{\pi} - \Sigma_{i=1}^{n} \delta |z_i|^2}: \sqrt{\delta} z_1: \cdots : \sqrt{\delta} z_n],$$
where $z_i's$ denote the standard complex coordinates on $\C^n$.  The maps $\theta_\delta$ pull $\omega_{FS}$ back to $\delta \, \omega_0$ and so these embeddings are all conformally symplectic.  Clearly, $\theta_1$ is a genuine symplectic embedding.  

As displayed in \cite{BEP}, for $\delta > \frac{n}{n+1}$ the torus $T(\frac{1}{\delta \pi (n+1)}) \subset \B$ is mapped by $\theta_\delta$ onto the Clifford torus.  It is also clear that $\theta_\delta (L_0)$ is contained inside $\R P^n \subset \C P^n$.

  For $\delta > \frac{n}{n+1}$, we define $\eta_\delta : Ham(\B) \rightarrow \R$ as follows: Given $\phi \in Ham(\B)$ pick a Hamiltonian $F \in C^{\infty}_c([0,1] \times \B)$ such that $\phi^1_F = \phi$.  Define \begin{equation}\label{eta_def2}
\eta_\delta(\phi) = \delta^{-1} \zeta( \delta F \circ \theta_\delta^{-1}).
\end{equation}
We must show that $\eta_\delta$ does not depend on the choice of the generating Hamiltonian $F$. One can easily check that the time-1 map of the Hamiltonian $\delta F \circ \theta_\delta^{-1}: [0,1] \times \C P^n \rightarrow \R$ is the Hamiltonian diffeomorphism $\theta_\delta \phi \theta_\delta^{-1}$.   This combined with Equation \eqref{def_mu} yields: $$\delta^{-1} \zeta( \delta F \circ \theta_\delta^{-1}) = - \delta^{-1} \mu(\theta_\delta \phi \theta_\delta^{-1}) + \delta^{-1} \int_0^1 \int_{\C P^n} \delta F(t,\theta_\delta^{-1}(x)) \; \omega_{FS}^n.$$ A simple computation shows that $$\int_0^1 \int_{\C P^n} \; F(t,\theta_\delta^{-1}(x)) \; \omega_{FS}^n = \delta^{n} \int_0^1 \int_{\B} F(t,x) \; \omega_0^n = \delta^{n} Cal(\phi),$$
where $Cal: Ham(\B) \rightarrow \R$ denotes the Calabi homomorphism: $$Cal(\phi^1_F) =  \int_0^1 \int_{\B} F(t,x) \; \omega_0^n \; dt.$$
 Hence, we have obtained the following alternative definition for $\eta_\delta$:
\begin{equation}\label{eta_def1}
\eta_{\delta}(\phi) = - \delta^{-1} \mu(\theta_\delta \phi \theta_\delta^{-1}) + \delta^{n} Cal(\phi).
\end{equation}
It is clear from the above formula that $\eta_\delta$ is well-defined and does not depend on the choice of the generating Hamiltonian $F$.  Furthermore, $\eta_\delta$ is a quasimorphism as it is a linear combination of quasimorphisms.  Results from \cite{Sey12} on descent of asymptotic spectral invariants provide an alternative method for proving that $\eta_\delta$ is a well-defined quasimorphism.  Additionally, one can use Formula \eqref{eta_def1} to show that the quasimorphisms $\eta_\delta$ have bounded defect.  Indeed, Formula \eqref{eta_def1}, combined with the fact the $Cal$ is a homomorphism, implies that
$$\eta_\delta(\phi \psi) - \eta_\delta(\phi) - \eta_\delta(\psi) = - \delta^{-1} (\mu(\theta_\delta \phi \psi \theta_\delta^{-1}) - \mu(\theta_\delta \phi \theta_\delta^{-1}) - \mu(\theta_\delta \psi \theta_\delta^{-1})).$$ From this we obtain that
\begin{equation} \label{bdd_defect}
|\eta_\delta(\phi \psi) - \eta_\delta(\phi) - \eta_\delta(\psi)| \leq  \delta^{-1} Def(\mu) \leq \frac{n+1}{n} Def(\mu), \end{equation}
where $Def(\mu)$ denotes the defect of $\mu$.  The last inequality holds because $\delta \in (\frac{n}{n+1}, 1]$.

\subsection{Proof of Theorem \ref{theo_quasi}}\label{subsec: proof}
 Part {\eqref{Hofer_Lip} of the theorem follows from Formula \eqref{eta_def1} and the fact that both $\mu$ and $Cal$ are Lipschitz continuous with respect to Hofer's norm.  Part \eqref{super_heavy} follows from Formula \eqref{eta_def2}:  Indeed, if $F \leq c$ on $T(\frac{n}{\pi \delta (n+1)})$, then $\delta F \circ \theta_\delta^{-1} \leq \delta c$ on the Clifford torus.  Since the Clifford torus is superheavy for $\zeta$ we see that $\delta ^{-1} \zeta(\delta F \circ \theta_\delta^{-1}) \leq c.$  

It remains to prove Part \eqref{descent} of Theorem \ref{theo_main}. Let $D = \frac{n+1}{n} Def(\mu)$.  Inequality \eqref{bdd_defect} implies that $|\eta_\delta(\phi^{-1} \psi) + \eta_\delta(\phi) - \eta_\delta(\psi)| \leq D.$  The Hamiltonian diffeomorphism $\phi^{-1} \psi$ preserves $L_0$ and hence, it is sufficient to prove that $\eta_\delta$ vanishes on the set of Hamiltonian diffeomorphisms which preserve $L_0$.

Suppose that $\phi^1_F(L_0) = L_0.$   Pick a positive number $\epsilon$ such that $B^{2n}(\epsilon)$, the ball of radius $\epsilon$, is displaceable inside $\B$.  For each $s \in [\epsilon, 1]$ we define a Hamiltonian diffeomorphism $\psi_s$ as follows:
    $$\psi_s(x) =  \left\{ \begin{array}{ll}
         s\phi^1_F(\frac{x}{s}) & \mbox{if $|x| \leq s $};\\
         x & \mbox{if $|x| \geq s$}.\end{array} \right. $$
    A simple computation shows that $\psi_s \in Ham(\B)$, and in fact, $\psi_s$ is the time-1 map of the flow of the following Hamiltonian:
    $$F_s(t,x) =  \left\{ \begin{array}{ll}
         s^2F(t,\frac{x}{s})& \mbox{if $|x| \leq s $};\\
         0 & \mbox{if $|x| \geq s$}.\end{array} \right. $$
Furthermore, the Hamiltonian diffeomorphisms $\psi_s$ all preserve $L_0$ because $\phi^1_F(L_0) = L_0$.  Now, consider the path of Hamiltonian diffeomorphisms $[0,1] \rightarrow Ham(\B)$ defined by
$$t \mapsto \psi_{\epsilon}^{-1} \psi_{t (1-\epsilon) + \epsilon}.$$
Pick a Hamiltonian $H\in C^{\infty}_c([0,1] \times \B)$ such that $$\phi^t_H = \psi_{\epsilon}^{-1} \psi_{t(1-\epsilon) + \epsilon}.$$
Because $\phi^t_H(L_0) = L_0$ for each $t \in [0,1]$, the Hamiltonian vector fields $\{X_{H_t}\}_{t\in[0,1]}$ must all be tangential to $L_0$.  Since $L_0$ is a Lagrangian, we conclude that $dH_t = \omega_0(X_{H_t}, \cdot)$ vanishes on the tangent space to $L_0$.  This implies that for each $t\in [0,1]$ the restriction of $H_t$ to $L_0$ is constant.  Now, $H_t$ is compactly supported in the interior of $\B$ and thus we conclude $H_t|_{L_0} = 0 \; \forall t \in [0,1].$ As a consequence the Hamiltonian $\delta H \circ \theta_\delta^{-1}$ vanishes on $\R P^n \subset \C P^n$; this is because $\theta_\delta(L_0) \subset \R P^n$.  Since $\R P^n$ is superheavy, we conclude, by Inequality \eqref{eq:superheavy} that $$\eta_\delta(\phi^1_H) = 0.$$

Next, note that $\psi_{\epsilon}$ is supported in $B^{2n}_{\epsilon}$ which is displaceable inside $\B$.  Therefore, Equation \eqref{eq:vanishing} yields that $$\eta_\delta(\psi_\epsilon) = 0.$$
Since $\phi^1_F = \psi_\epsilon \phi^1_H$ and $\eta_\delta$ is a quasimorphism, it follows that
 $$|\eta_\delta(\phi^1_F)| = |\eta_\delta(\phi^1_F)  - \eta_\delta(\psi_\epsilon) -\eta_\delta(\phi^1_H)| \leq D.$$
Thus far, we have proven that if $\phi^1_F(L_0) = L_0$, then $|\eta_\delta(\phi^1_F)| \leq D$.  Of course, if $\phi^1_F(L_0) = L_0,$ then $(\phi^1_F)^k(L_0) = L_0$ for any $k \in \mathbb{Z}$ and therefore $|\eta_\delta((\phi^1_F)^k)| \leq  D$.  But $\eta_\delta$ is homogeneous and so $\eta_\delta((\phi^1_F)^k) = k \eta_\delta(\phi^1_F).$ It follows that $|k \eta_\delta(\phi^1_F)| \leq D$ for all $k \in \mathbb{Z}$ and so $\eta_\delta(\phi^1_F) = 0.$
 This completes our proof.

\subsection*{Aknowledgements} In writing this paper, I benefited greatly from many stimulating conversations with Michael Khanevsky.  I am grateful to him for his time and valuable input.  Many of these conversation took place at the Institute for Advanced Study during the 2011-12 program on Symplectic Dynamics.  I would like to thank Helmut Hofer for inviting me to the institute, members and employees of the institute for their warm hospitality, and Alan Weinstein for his support which made my stay at the institute possible.  Lastly, for helpful comments and conversations, I am thankful to Lev Buhovsky, Vincent Humili\`ere, Leonid Polterovich, and Michael Usher.

\bibliographystyle{abbrv}
\bibliography{biblio}

\end{document}